\newcommand{\proofend}{\hfill \hbox{\vrule width 5pt height 5pt depth
0pt}}
\newcommand{\C}{\mathbb{C}}
\newcommand{\Z}{\mathbb{Z}}
\newcommand{\N}{\mathbb{N}}
\newcommand{\Q}{\mathbb{Q}}
\newcommand{\Oz}{\mathcal{O}}
\newcommand{\proj}{\mathbb{P}}
\newcommand{\A}{\mathbb{A}}
\newcommand{\F}{\mathbb{F}}
\newcommand{\spec}{\mathrm{Spec} \,}
\newtheorem{thm}{Theorem}
\newtheorem{lemma}{Lemma}[section]
\newtheorem{corol}[thm]{Corollary}
\newtheorem{conj}[thm]{Conjecture}
\begin{document}

\title[Finiteness of local torsion]{Finiteness of local torsion \\ for abelian $t$-modules}
\author{Vesselin Dimitrov}
\address{Yale University Math. Dept. \\ 10 Hillhouse Avenue \\ CT~06520--8283 }
\email{vesselin.dimitrov@yale.edu}

\begin{abstract}
Let $C/\F_q$ be a regular projective curve, $\infty \in C$ a closed point,  $A := \Gamma(C - \{\infty\}, \Oz_C)$, and $K := K(C)$ the fraction field of $A$.  Consider a finite extension $L/K$, a place $v$ of $L$, and an abelian $A$-module $M$ (in the sense of Anderson) over $L_v$. We prove that the $L_v$-rational torsion submodule $M(L_v)_{\mathrm{tors}}$ of $M$ is a finite $A$-module.
 \end{abstract}

\maketitle

\section{Introduction} \label{intro}

\subsection{}  \label{fak} The present note is motivated by a conjecture in algebraic dynamics over local fields of positive characteristic. It arose as a question in Fakhruddin's  paper~\cite{fakhruddin}, where among other results it was proven that a morphism $\varphi : \proj_{\Z_p}^N \to \proj_{\Z_p}^N$ of degree at least two possesses only finitely many $\Q_p$-rational preperiodic points. (It is crucial here that $\varphi$ is defined over $\Z_p$ and not merely over $\Q_p$, as is evidenced by the iteration $z \mapsto (z^p-z)/p$ of $\proj_{\Q_p}^1$, all of whose preperiodic points lie in $\Z_p \cup \{\infty\}$.) The following $\F_q[[T]]$-counterpart of this statement remains open.

\begin{conj}  \label{fakconj}
Let $\varphi : \proj_{\F_q[[T]]}^N \to \proj_{\F_q[[T]]}^N$ be a morphism of degree at least two. Then, $\varphi$ has only finitely many $\F_q((T))$-valued preperiodic points.
\end{conj}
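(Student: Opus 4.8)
Conjecture~\ref{fakconj} is open; what follows is the natural line of attack, transposing Fakhruddin's treatment of the $\Z_p$-case to the equal-characteristic local ring $\F_q[[T]]$ and reducing everything to the dynamics on residue polydisks, together with an indication of where it stalls. Set $d := \deg \varphi \ge 2$. Since $\proj^N$ is proper over $\F_q[[T]]$, the valuative criterion identifies $\proj^N(\F_q((T)))$ with $\proj^N(\F_q[[T]])$, so every point has a reduction $\bar P \in \proj^N(\F_q)$, and $\varphi$, being a morphism of the integral model, commutes with reduction; hence a $\varphi$-preperiodic point $P$ has $\bar P$ preperiodic for $\bar\varphi : \proj^N_{\F_q} \to \proj^N_{\F_q}$. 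As $\proj^N(\F_q)$ is finite there is an $e \ge 1$ with $\bar\varphi^{2e} = \bar\varphi^{e}$ on it; replacing $\varphi$ by $\varphi^e$ (which has the same preperiodic points and degree $d^e \ge 2$) we may assume $\bar\varphi$ is idempotent on $\proj^N(\F_q)$, so that $\bar\varphi(\bar P)$ is a fixed point of $\bar\varphi$ for every $P$. The forward $\varphi$-orbit of any point of a residue polydisk $D(\bar Q)$ with $\bar\varphi(\bar Q) = \bar Q$ stays in $D(\bar Q)$, and $\varphi$ has finite fibres, so it suffices to prove: \emph{for each of the finitely many fixed points $\bar Q$ of $\bar\varphi$, the set of $\varphi$-preperiodic points in $D(\bar Q) \cong (T\F_q[[T]])^N$ is finite.} Writing $\mathfrak m := T\F_q[[T]]$, the restriction $h := \varphi|_{D(\bar Q)}$ is a finite analytic self-map of $\mathfrak m^N$ of the form $h(z) = c + Bz + (\text{terms of order} \ge 2)$ with $c \in \mathfrak m^N$ and $B \in M_N(\F_q[[T]])$ reducing to $\bar B := d\bar\varphi_{\bar Q}$.

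Write $\F_q^N = V_0 \oplus V_1$ for the $\bar B$-stable splitting in which $V_1$ collects the generalized eigenspaces for eigenvalues that are roots of unity and $V_0$ the rest, and pass to a further iterate so that $\bar B|_{V_1}$ is unipotent. In the \emph{hyperbolic case} $V_1 = 0$ the eigenvalues of $\bar B^\ell$ avoid $1$ for every $\ell \ge 1$, so $B^\ell - I \in GL_N(\F_q[[T]])$ and $h^\ell(z) = z$ has a unique solution in $\mathfrak m^N$ by a contraction argument; combined with the $1$-Lipschitz bound $\|h(x) - h(y)\| \le \|x - y\|$ on $\mathfrak m^N$ and a Weierstrass-preparation estimate for $h^{-\ell}(z_0) \cap \mathfrak m^N$ in terms of $\deg \varphi^\ell = d^\ell$, this should close the case as in Fakhruddin's treatment of the $\Z_p$-analogue. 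The sub-case in which $\varphi$ is, after projectivizing, a scalar endomorphism $\phi_a$ of an abelian $A$-module $M$ over $\F_q((T))$ (with $C = \proj^1$, $A = \F_q[T]$, $a \in A$ non-constant) follows immediately: a point preperiodic for $\phi_a$ satisfies $\phi_{a^m - a^n}(x) = 0$ for some $m > n$, hence lies in $M(\F_q((T)))_{\mathrm{tors}}$, which is finite by the theorem quoted in the abstract (applied with $K = \F_q(T)$ and $v$ the $T$-adic place).

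The main obstacle — and the reason Conjecture~\ref{fakconj} remains open — is the neutral case $V_1 \ne 0$. Over a residue field of characteristic zero this is controlled by the local theory of parabolic fixed points (formal normal forms, finitely many attracting petals), but over $\F_q$ the reduction $\bar\varphi$ can be wildly ramified along the $V_1$-directions: it may behave like $z \mapsto z + z^q$, whose derivative is identically $1$ because $q z^{q-1} = 0$ in characteristic $p$, or like a truncation of a Carlitz/Anderson exponential. Then neither the contraction argument nor the Weierstrass bound applies, there is no formal normal form to fall back on, and the $\varphi$-preperiodic points in $D(\bar Q)$ could a priori be infinite; controlling this wild, purely inseparable residual dynamics uniformly in the period is the missing ingredient. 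The abelian $t$-module theorem of this paper is the analogue of the polarized/abelian-variety sub-case of the conjecture, and is offered as evidence for the general statement rather than as a step in its proof.
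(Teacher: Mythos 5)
You correctly recognize that the statement is a conjecture, which the paper itself declares open; the paper does not prove it and only establishes Theorem~\ref{main} (finiteness of local torsion for abelian $t$-modules) as evidence toward it. Your outline of the reduction---use properness to identify $\proj^N(\F_q((T)))$ with $\proj^N(\F_q[[T]])$, reduce modulo $T$, pass to an iterate that is idempotent on the finite set $\proj^N(\F_q)$, and then study the dynamics in each residue polydisk $D(\bar Q)$ around a fixed point of the reduction---is the standard transposition of Fakhruddin's $\Z_p$-argument, and your identification of the abelian $t$-module subcase with the torsion-finiteness theorem quoted in the abstract is correct ($\phi_a^m(x)=\phi_a^n(x)$ gives $\phi_{a^m-a^n}(x)=0$).

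One point worth sharpening: over the residue field $\F_q$, every nonzero eigenvalue of $\bar B = d\bar\varphi_{\bar Q}$ lies in $\overline{\F_q}^{\times}$ and is therefore automatically a root of unity. Your space $V_0$ is thus precisely the generalized $0$-eigenspace (the part on which $\bar B$ is nilpotent), so the ``hyperbolic case'' $V_1 = 0$ is really only the super-attracting case $\bar B$ nilpotent; there is no genuinely hyperbolic-but-not-attracting regime in equal characteristic. This makes the neutral case $V_1 \neq 0$ not a boundary phenomenon but the generic situation, which is consistent with---and strengthens---your diagnosis that wild, inseparable residual dynamics (e.g.\ $z \mapsto z + z^q$, whose $T$-adic derivative is identically $1$) is the missing ingredient. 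You may also want to cite the remark in the paper that Fakhruddin already proved, in this $\F_q((T))$ setting, that after passing to an iterate all periods are powers of $p$; this is the correct partial result that survives the loss of the Weierstrass/contraction estimates, and your outline should be read as an attempt to upgrade that statement to full finiteness.
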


It is shown in~\cite{fakhruddin} that there is an $n > 0$ such that all periods of $\F_q((T))$-valued points for $\varphi^n$ are powers of the equicharacteristic $p$. See also Zieve's thesis~\cite{zieve}, Theorem~50, for a more precise result in the case $N = 1$.

\subsection{}  \label{abelianvar} An evidence for Conjecture~\ref{fakconj} is the well known  fact that an abelian variety $A$ over any non-archimedean local field $F$ has finite torsion group: $\# A(F)_{\mathrm{tors}} < \infty$. See, for example, Clark and Xarles~\cite{clarkxarles}. Here, $F$ can be a $p$-adic field $\Q_p$ as well as a Laurent series field $\F_q((T))$; hence, the analog of Conjecture~\ref{fakconj} holds for an isogeny $\varphi : A \to A$ of degree at least two.

\subsection{}  In function field arithmetic the counterpart of an abelian variety is the notion of an abelian $t$-module, introduced by Anderson in~\cite{anderson}. It includes, as the special case of dimension one, the notion of a Drinfeld module. The isogenies of such modules over $\F_q((T))$ give rise to particularly well behaved additive endomorphisms of affine space $\A_{\F_q((T))}^N$; many of them extend as endomorphisms of $\proj_{\F_q((T))}^N$.  The class of additive polynomials is of particular interest in Conjecture~\ref{fakconj}. We show that no counterexamples arise from an abelian $t$-module, by proving the analog of the finiteness result about abelian varieties in~\ref{abelianvar}.

\subsection{} \label{termin} To state our result, we need to introduce some notation. Let $C/\F_q$ be a regular, projective, geometrically connected curve, and consider a closed point $\infty \in C$, declared as the place at infinity. Consider the ring $A := \Gamma(C - \{\infty\}, \Oz_C)$ of functions on $C$ regular outside $\infty$, and $K$ its quotient field (the function field of $C$). Let further $L/K$ be a finite extension, $v$ a place of $L$, and $L_v$ the $v$-adic completion. An \emph{Anderson $A$-module} of dimension $d$ over $L_v$ is a homomorphism $\Phi : A \to \mathrm{End}(\mathbb{G}_{a/L_v}^d)$ such that, for every $f \in A$, the derivative (zeroth order term) $D\Phi(f) \in \mathrm{End}(\mathrm{Lie}(\mathbb{G}_{a/L_v}^d)) \cong M_d(L_v)$ has all its eigenvalues equal to $f$.

Let $\tau : x \mapsto x^q$ be the Frobenius substitution and $L_v\{\tau\}$ the twisted Frobenius ring defined by the commutation relation $\tau c = c^q \tau$, for $c \in L_v$. An $A$-module $\Phi$ over $L_v$ endows $\mathrm{Hom}(\mathbb{G}_{a/L_v}^d,\mathbb{G}_{a/L_v})$ with an $A \otimes_{\F_q} L_v-L_v\{\tau\}$ bimodule structure in an obvious way. Then, following Anderson~\cite{anderson}, $\Phi$ is said to be an \emph{abelian $A$-module} if  $\mathrm{Hom}(\mathbb{G}_{a/L_v}^d,\mathbb{G}_{a/L_v})$ is free of finite rank as  $A \otimes_{\F_q} L_v$-module. The latter condition is equivalent to the finite generation of the $A \otimes_{\F_q} L_v$-module  $\mathrm{Hom}(\mathbb{G}_{a/L_v}^d,\mathbb{G}_{a/L_v})$ (see~\cite{anderson}, Lemma~1.4.5).

\begin{thm} \label{main}
Let $\Phi$ be an abelian $A$-module over $L_v$. Then, the $L_v$-rational torsion module
$$
\{ x \in \mathbb{G}_a^d(L_v) \mid \Phi(f)(x) = 0 \textrm{ for some } f \in A \setminus \{0\} \}
$$
is a finite $A$-module.
\end{thm}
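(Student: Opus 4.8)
The plan is to show that the torsion module in the statement — call it $M(L_v)_{\mathrm{tors}} \subseteq \mathbb{G}_a^d(L_v) = L_v^d$ — is finite \emph{as a set}; since $A$ is a finitely generated $\F_q$-algebra of dimension one, with finite residue fields, a set carrying an $A$-module structure is finitely generated (hence a finite $A$-module) as soon as it is finite, so this suffices. I would deduce finiteness from two independent facts: that $M(L_v)_{\mathrm{tors}}$ is $v$-adically discrete in $L_v^d$, and that it is $v$-adically bounded. Since $L_v$ is a local field, $L_v^d$ is locally compact, and a discrete bounded (equivalently: relatively compact) subset of it is finite.

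\emph{Discreteness.} Because $\mathbb{G}_a^d$ is commutative, $M(L_v)_{\mathrm{tors}}$ is a subgroup, so it is enough to produce a neighbourhood $U$ of $0$ meeting it only in $\{0\}$; then $y+U$ isolates any torsion point $y$. Here I would use the formal logarithm $\log_\Phi = \sum_{i\ge 0}\ell_i\tau^i \in M_d(L_v)\{\tau\}$, the additive inverse of the $t$-module exponential, normalized by $\ell_0 = \mathrm{Id}$ and satisfying the formal identity $\log_\Phi\circ\Phi(f) = D\Phi(f)\circ\log_\Phi$ for all $f\in A$. On a small disc $U = \{\,|x| < \delta\,\}$ the series $\log_\Phi$ converges; being additive and of the shape $x\mapsto x + O(|x|^q)$ near $0$, it is injective on $U$ and $\log_\Phi(0)=0$. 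If $x\in U$ is torsion, say $\Phi(f)(x)=0$ with $f\in A\setminus\{0\}$, then evaluating the formal identity at $x$ makes sense: the right-hand side $D\Phi(f)(\log_\Phi x)$ is a matrix applied to a convergent series, while the left-hand side is $\log_\Phi$ applied to the honest point $\Phi(f)(x)=0\in U$, hence converges to $\log_\Phi(0)=0$; a formal power-series identity both of whose sides converge at a point holds there, so $D\Phi(f)(\log_\Phi x) = 0$. But $\Phi$ has generic characteristic ($A\hookrightarrow K\hookrightarrow L\hookrightarrow L_v$), so the eigenvalues of $D\Phi(f)$ are all equal to the nonzero scalar $f\in L_v^\times$, whence $D\Phi(f)\in M_d(L_v)$ is invertible. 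Therefore $\log_\Phi x = 0$, and by injectivity on $U$, $x=0$. (Put differently: $\mathrm{Lie}(\Phi)(L_v)$ is $A$-torsion-free, and the logarithm embeds the torsion lying in $U$ into it.)

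\emph{Boundedness.} Fix $f\in A\setminus\{0\}$ and write $\Phi(f) = \sum_{i=0}^{m}C_i(f)\tau^i$ with $C_i(f)\in M_d(L_v)$, $C_0(f)=D\Phi(f)$, $C_m(f)\ne 0$, $m=m(f)=\deg_\tau\Phi(f)$. For $d=1$ a point $x\in M[f](L_v)$ is a root of the one-variable additive polynomial $\sum_i C_i(f)X^{q^i}$, and its Newton polygon bounds $v(x)$ below in terms of the $v(C_i(f))$ and $q^{m(f)}$. Since $x\in M[f](L_v)$ implies $x\in M[f^k](L_v)$ for every $k\ge 1$, I would apply this estimate to $\Phi(f^k)=\Phi(f)^k$ and let $k\to\infty$: one has $m(f^k)=k\,m(f)$, so the constant and intermediate coefficients contribute valuations of size $O(k)$ or $O(q^{\,i})$ with $i$ of order $k$, both negligible against $q^{\,k\,m(f)}$, and only the leading coefficient survives in the limit. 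The key point is that the composition law $\Phi(fg)=\Phi(f)\circ\Phi(g)$ forces $C_{m(fg)}(fg)=C_{m(f)}(f)\cdot C_{m(g)}(g)^{(q^{m(f)})}$ and $m(fg)=m(f)+m(g)$; imposing symmetry in $f$ and $g$ on the resulting expression shows that $v\big(C_{m(g)}(g)\big)/(q^{m(g)}-1)$ is a constant $\Gamma_0$ independent of the nonconstant $g$. Hence $v(x)\ge -\Gamma_0$ for every $x\in M(L_v)_{\mathrm{tors}}$, i.e. the torsion lies in the fixed ball $\{\,v(x)\ge -\Gamma_0\,\}$. For general $d$ one runs the same argument after reducing $\Phi(f)$ to a diagonal form over the non-commutative principal ideal ring $L_v\{\tau\}$ (Smith normal form) and controlling the distortion introduced by the transition matrices uniformly in $f$; I expect this uniformity to be the real work, and hence the main obstacle of the proof.

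Combining the two halves: $M(L_v)_{\mathrm{tors}}$ is discrete and bounded in the locally compact space $L_v^d$, so it is finite, hence a finite $A$-module. It is worth noting that nothing in this plan requires a model of $\Phi$ over the ring of integers of $L_v$ — there is no hypothesis of good or semistable reduction — because the role played for abelian varieties by the N\'eron model (integrality of torsion points) is taken over here, directly, by the Newton polygons of the additive polynomials $\Phi(f)$; the abelianness of $\Phi$ enters only through the finiteness of $d$ and the consequent polynomiality of the $\Phi(f)$.
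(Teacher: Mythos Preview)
Your overall architecture --- torsion is discrete (via the formal logarithm) and bounded, hence finite in the locally compact space $L_v^d$ --- is exactly the paper's strategy, and your discreteness argument is essentially its Corollary~\ref{keypt}. The paper packages the final step as a pigeonhole argument rather than invoking local compactness directly, but this is cosmetic.

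The genuine gap is in your boundedness step for $d>1$, and it stems from a misidentification of the abelian hypothesis. You write that ``the abelianness of $\Phi$ enters only through the finiteness of $d$ and the consequent polynomiality of the $\Phi(f)$'', but every Anderson $A$-module has finite $d$ and polynomial $\Phi(f)$ by definition; \emph{abelian} is the further requirement that $M=\mathrm{Hom}(\mathbb{G}_{a}^d,\mathbb{G}_a)$ be finitely generated over $A\otimes_{\F_q}L_v$. The paper's Remark after Lemma~\ref{bounded} gives the non-abelian example $\Phi_t(x)=tx$ for which the filled Julia set is all of $L_v$; so any boundedness argument that does not invoke the abelian condition is suspect. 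Concretely, your leading-coefficient calculus breaks: for $d>1$ the top matrices $C_{m(f)}(f)\in M_d(L_v)$ need not be invertible, the twisted product $C_{m(f)}(f)\cdot C_{m(g)}(g)^{(q^{m(f)})}$ can vanish, and hence neither $m(fg)=m(f)+m(g)$ nor the constancy of $v(C_{m(g)}(g))/(q^{m(g)}-1)$ is available. The Smith normal form reduction you propose would need the transition matrices in $\mathrm{GL}_d(L_v\{\tau\})$ to be controlled uniformly over all $f$ (or over the powers $f^k$), and you correctly flag this as the crux --- but you have not supplied it, and there is no evident mechanism for it absent the abelian hypothesis.

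The paper's replacement for this step is Lemma~\ref{bounded} (due to Ingram): pick $A\otimes L_v$-generators $\alpha_1,\ldots,\alpha_r$ of $M$ --- this is exactly where abelianness enters --- and show by induction that $\tau^n m_i=\sum_j (f_{ijn}\otimes c_{ijn})\alpha_j$ with $v(c_{ijn})\ge -Cq^n$. If some coordinate of $x$ has valuation below $-2C$, then $\tau^n m_i(x)$ has valuation $<-2Cq^n$, incompatible with the orbit $\{\Phi_{f_{ijn}}(x)\}$ being bounded. This bounds the whole filled Julia module, hence the torsion. Your $d=1$ argument is correct and recovers the Drinfeld-module case, but for $d>1$ you should replace the Newton-polygon route by this finite-generation argument.
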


The paper is organized as follows. In section~\ref{julia} we consider the filled Julia $A$-module of $\Phi$ and prove that it is bounded. It is here that the hypothesis that $\Phi$ is abelian is used in a crucial way; we do not know if the statement of our theorem continues to hold for arbitrary Anderson $A$-modules. In section~\ref{formal} we consider the formal module attached to an Anderson $A$-module over the valuation ring of $L_v$ and exploit the positive radius of convergence of the exponential map. The proof of Theorem~\ref{main} is then completed in section~\ref{fine}.

\bigskip

\section{Boundedness of the filled Julia module} \label{julia}

Let $\Phi$ be an Anderson $A$-module over $L_v$. Thus $\mathbb{G}_a^d(L_v)$  acquires an $A$-module structure via $\Phi$. We follow Ingram~\cite{ingram} in defining the \emph{$L_v$-rational filled Julia module} $\Phi^{\mathrm{FJ}}(L_v)$ to be the $A$-submodule consisting of the $x \in \mathbb{G}_a^d(L_v)$ having $v$-adically bounded $A$-orbit. Ingram works with the case $d = 1$ of Drinfeld modules, and he extends the definition of $\Phi^{\mathrm{FJ}}$ to the Berkovich affine line to consider the connected component of the identity. 

For our purpose of proving finiteness of local torsion, the consideration of the $L_v$-rational part of $\Phi^{\mathrm{FJ}}$ is sufficient. The following higher dimensional result is also due to Ingram and is essentially contained in the unpublished manuscript~\cite{ingraman}. The short proof presented here was found independently by the present author, following the idea of the one dimensional case presented in~\cite{ingram}.

\begin{lemma}[Ingram] \label{bounded}
If $\Phi$ is abelian, the set $\Phi^{\mathrm{FJ}}(L_v)$ is $v$-adically bounded.
\end{lemma}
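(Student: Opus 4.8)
The plan is to use the hypothesis that $\Phi$ is abelian through one structural input only: that $M := \mathrm{Hom}(\mathbb{G}_{a/L_v}^d, \mathbb{G}_{a/L_v})$ is a finitely generated module over $R := A \otimes_{\F_q} L_v$, where $f \in A$ acts by precomposition with $\Phi(f)$ and $c \in L_v$ by postcomposition with multiplication by $c$. Let $x_1, \dots, x_d \in M$ be the coordinate projections $\mathbb{G}_a^d \to \mathbb{G}_a$. Every element of $M$ is an $L_v$-linear combination of the additive polynomials $\tau^k \circ x_i$ with $k \ge 0$ and $1 \le i \le d$; since $M$ is generated over $R$ by finitely many elements, each of them a combination involving only finitely many exponents $k$, there is an integer $D_0 \ge 0$ for which $\{\tau^k \circ x_i : 0 \le k \le D_0,\ 1 \le i \le d\}$ already generates $M$ over $R$.

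I would then apply this to $\tau^{D_0+1} \circ x_i$: for each $i$ there is an identity in $M$
\[
\tau^{D_0+1} \circ x_i \;=\; \sum_{0 \le k \le D_0,\ 1 \le i' \le d} r_{k,i'} \cdot (\tau^k \circ x_{i'}), \qquad r_{k,i'} \in R.
\]
Writing $r_{k,i'} = \sum_m g_{k,i',m} \otimes c_{k,i',m}$ with $g_{k,i',m} \in A$ and $c_{k,i',m} \in L_v$, and unwinding the two actions, evaluation at a point $x \in \mathbb{G}_a^d(L_v)$ yields
\[
x_i(x)^{q^{D_0+1}} \;=\; \sum_{k,\,i',\,m} c_{k,i',m}\,\bigl(x_{i'}(\Phi(g_{k,i',m})(x))\bigr)^{q^{k}}.
\]
Suppose now $x \in \Phi^{\mathrm{FJ}}(L_v)$, set $C_x := \sup_{f \in A} |\Phi(f)(x)|_v$ (finite by definition), and assume $C_x \ge 1$ --- otherwise $|x|_v \le C_x < 1$ and we are done. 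Each $\Phi(g_{k,i',m})(x)$ lies in the $A$-orbit of $x$, so it has size $\le C_x$ and each of its coordinates is $\le C_x$; thus every summand on the right has size $\le |c_{k,i',m}|\, C_x^{q^{k}} \le \gamma\, C_x^{q^{D_0}}$, with $\gamma$ the maximum of the finitely many, $x$-independent, values $|c_{k,i',m}|$. Taking the maximum over $i$ gives
\[
|x|_v \;\le\; c\, C_x^{1/q}, \qquad c := \gamma^{1/q^{D_0+1}},
\]
a constant depending only on $\Phi$. The decisive feature is the exponent $1/q < 1$, which appears exactly because the $\tau$-power on the left was pushed one step beyond the $\tau$-degrees of the generators.

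Finally I would bootstrap: the estimate $|z|_v \le c\, C_z^{1/q}$ ($C_z$ the orbit radius of $z$, and the bound being trivial when $C_z < 1$) applies to every point $z = \Phi(g)(x)$ of the orbit of $x$, for which $C_z \le C_x$; taking the supremum over $g \in A$ yields $C_x \le c\, C_x^{1/q}$, hence the uniform bound $C_x \le c^{q/(q-1)}$. In particular $|x|_v \le c^{q/(q-1)}$, so $\Phi^{\mathrm{FJ}}(L_v)$ is contained in the $v$-adic ball of radius $\max(1, c^{q/(q-1)})$, which is the assertion.

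The only step I expect to carry real content is the middle one --- extracting $|x|_v \le c\, C_x^{1/q}$ from the finite generation of $M$ over $R$, which is precisely where abelianness is used. One small point to check there is that the possibly nonzero constant ($\F_q$-)parts of the coefficients $r_{k,i'}$ do no damage; they do not, since $\Phi(g)(x)$ is an orbit point of $x$ for every $g \in A$, constant or not. The rest is bookkeeping with the module actions together with the one-line bootstrap.
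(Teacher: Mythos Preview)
Your argument is correct and follows essentially the same approach as the paper's: both use the finite generation of $M$ over $A\otimes_{\F_q}L_v$ to express a high power $\tau^{n}m_i$ as an $A\otimes_{\F_q}L_v$-combination of elements of bounded $\tau$-degree, and then exploit the resulting exponent gap. The only cosmetic difference is that the paper iterates this relation to control all $\tau^{n}m_i$ with coefficients of valuation $\geq -Cq^{n}$ and lets $n\to\infty$, whereas you use the single relation at level $D_0+1$ to obtain $|x|_v\le c\,C_x^{1/q}$ and then bootstrap over the orbit; the content is the same.
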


\begin{proof}
Let $m_1, \ldots, m_d : \mathbb{G}_a^d \to \mathbb{G}_a$ be the coordinate projections. They form a free basis of $M := \mathrm{Hom}(\mathbb{G}_{a/L_v}^d,\mathbb{G}_{a/L_v})$ as $L_v\{\tau\}$-module. Our assumption is that $M$ is also finitely generated as $A \otimes_{\F_q} L_v$-module; let $\alpha_1, \ldots, \alpha_r$ be a set of generators.

Write also $v : L_v \setminus \{0\} \twoheadrightarrow \Z$ for the normalized valuation of the local field $L_v$. We claim that for all $n \in \N$ and $i$ there is a presentation
\begin{equation} \label{present}
\tau^n  m_i = \sum_{j=1}^r (f_{ijn} \otimes c_{ijn}) \alpha_j, \quad v(c_{ijn}) \geq -Cq^n,
\end{equation}
where $C < \infty$ is a constant independent of $n$.

The claim implies our desired conclusion. Indeed, if $v(m_i(x)) < -2C$ for some $i$ then $v(\tau^n m_i(x)) = q^n v(m_i(x)) < -2Cq^n$, and~(\ref{present}) as $n \to +\infty$ shows that $x$ cannot lie in the filled Julia set: otherwise it would hold $v((f_{ijn} \otimes c_{ijn}) \alpha_j (x)) = v(c_{ijn}\alpha_j(\Phi_{f_{ijn}}(x)) ) \geq v(c_{ijn}) + O_n(1)$ for all~$j$.

To prove our claim, note that $\tau \cdot (f \otimes c) \alpha = (f \otimes c^q )  \tau \alpha$ with $v(c^q) = q v(c)$. Since by assumption the $m_i$ and the $\tau \alpha_j$ can be expressed as $A \otimes_{\F_q} L_v$-linear combinations in $\alpha_1, \ldots, \alpha_r$, a presentation~(\ref{present}) follows by induction on~$n$.
\end{proof}

\medskip

{\it Remark. }  It is essential in Lemma~\ref{bounded} that $\Phi$ be abelian; the trivial $t$-module given by $\Phi_t : \, x \mapsto tx$ over $\F_q(t)$ has $\Phi^{\mathrm{FJ}}(L_v) = L_v$ for all $v$. This  reaffirms that abelian $t$-modules are the correct analog of abelian varieties. In this analogy, the basic boundedness result verified here corresponds to the existence of the N\'eron model. This parallel was detailed by Ingram in~\cite{ingram} for the case of dimension one (Drinfeld modules). It would be interesting more generally to study the structure of the component $A$-module of the filled Julia set of $\Phi$ on the Berkovich affine space $\A_{/\C_v}^{d,\mathrm{an}}$. (The field $\C_v$  in the notation here is, as usual, the completion of an algebraic closure of $L_v$.)

\bigskip

\section{The formal module}  \label{formal}

Let $O_v$ be the valuation ring of $L_v$ and $\mathfrak{m}_v$ its maximal ideal. In this section we assume $\Phi$ is an Anderson $A$-module over $L_v$ such that
\begin{equation} \label{uni}
\Phi(f) \equiv f 1_d + N_f \mod{   \mathfrak{m}_v M_d(O_v\{\tau\})  }
\end{equation}
for all $f \in A$. Here, as required by the definition of an Anderson module, $N_f \in M_d(L_v)$ is a nilpotent matrix. Any $A$-module over $L_v$ can be brought to such a form by conjugation with a scalar.

For $f \notin \mathfrak{m}_v \cap A$, the element $\Phi(f) \in M_d(L_v\{\tau\})$ satisfying~(\ref{uni}) can be inverted in $M_d(L_v\{\tau\}) + \mathfrak{m}_v M_d(O_v\{\{\tau\}\}) \subset M_d(L_v\{\{\tau\}\})$. In this way, letting $R := A_{\mathfrak{m}_v \cap A}$ the local ring of $\spec{A}$ at $\mathfrak{m}_v \cap A$, we obtain a formal Anderson $R$-module
$$
\widehat{\Phi} : R \to M_d(L_v\{\{\tau\}\})
$$
over $L_v$, such that for all $r \in R$, $\widehat{\Phi}(r)(x)$ converges in $\mathbb{G}_a^d(L_v)$ for all $x \in \mathbb{G}_a^d(\mathfrak{m}_v)$.

\subsection{The formal logarithm} The proof of Prop.~2.1 in Rosen~\cite{rosen} (which treats the case $d = 1$) extends almost verbatim to yield  a unique $l_{\Phi} \in M_d(L_v\{\{\tau\}\})$ satisfying $Dl_{\widehat{\Phi}} = 1_d$ and $l_{\widehat{\Phi}} \circ \widehat{\Phi}(r) = r l_{\widehat{\Phi}}$ for all $r \in R$.

We indicate the necessary changes. As in {\it loc. cit.}, given the first identity $Dl_{\widehat{\Phi}} = 1_d$, it is enough to verify the second identity for $r = \pi$, a fixed generator for the maximal ideal of $R$. Write $\widehat{\Phi}(\pi) = \pi 1_d + N_{\pi} + \sum_{n \geq 1} B_n \tau^n$ and $l = 1_d + \sum_{n \geq 1} C_n \tau^n$ with $N_{\pi}$ nilpotent and undetermined coefficients $C_n \in M_d(L_v)$; here, almost all $B_n$ are in $M_d(O_v)$. Then $l_{\widehat{\Phi}} \circ \widehat{\Phi}(\pi) = \pi l_{\widehat{\Phi}}$ translates into the recursion
\begin{equation} \label{reclog}
\big( (\pi - \pi^{q^n}) 1_d - N_{\pi}^{q^n}\big)C_n = B_n + \sum_{\substack{i+j = n \\ j < n}} C_jB_i^{q^j}
\end{equation}
Since $N_{\pi}$ is nilpotent, the coefficient on the left-hand side of~(\ref{reclog}) is invertible for all $n \geq 1$, and equals the scalar matrix $(\pi - \pi^{q^n})1_d$ for $n \gg 0$. As in~\cite{rosen}, we see from this description that~(\ref{reclog}) defines uniquely the desired $l_{\widehat{\Phi}}$, and that $l_{\widehat{\Phi}}(x)$ converges for $x \in \mathbb{G}_a^d(\mathfrak{m}_v)$. Moreover, writing as before $m_i : \mathbb{G}_a^d \to \mathbb{G}_a$ the coordinate projections and $v : O_v \twoheadrightarrow \Z$ the normalized valuation, we obtain:

\begin{lemma} \label{loga}
There is a $k_1 = k_1(\Phi) < \infty$ such that, for $x \in \mathbb{G}_a^d(\mathfrak{m}_v^{k_1})$, it holds $v(m_i(l_{\widehat{\Phi}}(x))) = v(m_i(x))$, for $i = 1,\ldots,d$. \proofend
\end{lemma}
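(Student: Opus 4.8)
The plan is to read off from the recursion~(\ref{reclog}) a lower bound, linear in $n$, for the $v$-valuations of the coefficients $C_n$ of $l_{\widehat{\Phi}} = 1_d + \sum_{n \ge 1} C_n \tau^n$, and then to take $k_1$ large enough that on $\mathbb{G}_a^d(\mathfrak{m}_v^{k_1})$ the higher Frobenius terms of $l_{\widehat{\Phi}}$ become $v$-adically negligible, so that $l_{\widehat{\Phi}}$ restricts there to an isometry. In detail: for $x \in \mathbb{G}_a^d(\mathfrak{m}_v)$ one has $m_i(l_{\widehat{\Phi}}(x)) = m_i(x) + \sum_{n \ge 1}\sum_{j=1}^d (C_n)_{ij}\, m_j(x)^{q^n}$, so, writing $v(C_n)$ for the least valuation of an entry of $C_n$ and $\mu := \min_j v(m_j(x))$ — which is $\ge k_1$ when $x \in \mathbb{G}_a^d(\mathfrak{m}_v^{k_1})$ — every summand of the double sum has valuation $\ge v(C_n) + q^n \mu$. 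It therefore suffices to find a constant $A = A(\Phi) < \infty$ with $v(C_n) \ge -An$ for all $n \ge 1$: then, choosing $k_1 := \lceil (A+1)/(q-1)\rceil$ and using $q^n - 1 \ge n(q-1)$, one finds $v(C_n) + q^n\mu - \mu \ge -An + (q^n-1)k_1 \ge 1$, whence the whole correction has valuation $\ge \mu+1 > \mu$, and the ultrametric inequality gives $v(m_i(l_{\widehat{\Phi}}(x))) = v(m_i(x))$ for every coordinate $i$ attaining $\mu$; in particular $l_{\widehat{\Phi}}$ is injective on $\mathbb{G}_a^d(\mathfrak{m}_v^{k_1})$, which is the essential content for the application to torsion.

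The heart of the matter — and the step I expect to be the main obstacle — is the estimate $v(C_n) \ge -An$. On top of the normalisation~(\ref{uni}), I would choose the uniformiser $\pi$ of $R$ to lie in $\mathfrak{m}_v \cap A$ (possible by taking $\pi \in (\mathfrak{m}_v \cap A)\setminus(\mathfrak{m}_v \cap A)^2$), so that $\widehat{\Phi}(\pi) = \Phi(\pi)$ is an honest twisted polynomial and~(\ref{uni}) forces $N_\pi \in M_d(O_v)$ and $B_n \in \mathfrak{m}_v M_d(O_v)$ for all $n$; this integrality of all the $B_n$ — equivalently, that $\pi$ may be chosen in $A$ and not merely in $R$ — is the delicate input, for otherwise the Frobenius twists $B_i^{q^j}$ occurring in~(\ref{reclog}) could acquire large denominators and the induction below would collapse. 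Granting it, the coefficient matrix in~(\ref{reclog}), being a scalar minus a nilpotent, has entries in $O_v$, determinant $(\pi - \pi^{q^n})^d$ of valuation $d\, v(\pi)$ (note $v(\pi^{q^n}) = q^n v(\pi) > v(\pi)$), and adjugate in $M_d(O_v)$; hence its inverse has all entries of valuation $\ge -d\,v(\pi)$, uniformly in $n$. Solving~(\ref{reclog}) for $C_n$, and using that the $B_i$ and their Frobenius twists are integral, gives $v(C_n) \ge \min\{0,\, v(C_1),\dots,v(C_{n-1})\} - d\,v(\pi)$, so $v(C_n) \ge -An$ with $A := d\,v(\pi)$ follows by an immediate induction. (Feeding the sharper $v(B_i^{q^j}) = q^j v(B_i) \ge q^j$ back into this bound even yields a uniform $v(C_n) \ge -A'$, but the linear bound already suffices for the choice of $k_1$ above.)

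Putting the two together proves the lemma: with $A = d\,v(\pi)$ and $k_1 = \lceil (A+1)/(q-1)\rceil$, for $x \in \mathbb{G}_a^d(\mathfrak{m}_v^{k_1})$ the Frobenius correction in each coordinate has valuation $\ge \mu + 1$, which strictly exceeds $v(m_i(x))$ precisely when $v(m_i(x))$ realises the minimum $\mu = \min_j v(m_j(x))$, and the ultrametric inequality then forces $v(m_i(l_{\widehat{\Phi}}(x))) = v(m_i(x))$.
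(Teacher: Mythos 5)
Your overall strategy --- bound $v(C_n)$ linearly in $n$ from the recursion, then take $k_1$ large enough that the Frobenius tail becomes negligible --- is the right one, and the downstream step (choosing $k_1 = \lceil(A+1)/(q-1)\rceil$ and applying the ultrametric inequality) is fine. The paper itself gives no explicit proof of Lemma~\ref{loga}, only a pointer to Rosen's computations, so your attempt to supply the estimate $v(C_n)\ge -An$ is exactly what is needed. However, there are two genuine issues.

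First, the claim that condition~(\ref{uni}) ``forces $N_\pi \in M_d(O_v)$'' is not justified. Condition~(\ref{uni}) only constrains the coefficients $B_n$ of $\tau^n$ for $n\ge 1$: the $\tau^0$ term of $\Phi(\pi)-(\pi 1_d + N_\pi)$ is identically zero (since $N_\pi$ is by definition $D\Phi(\pi)-\pi 1_d$), so the congruence says nothing about $N_\pi$. Moreover, the normalization~(\ref{uni}) is achieved by conjugation with a scalar $\lambda 1_d$, which multiplies the $\tau^n$ coefficient by $\lambda^{q^n-1}$ and so leaves the $\tau^0$ coefficient --- and hence $N_\pi$ --- unchanged. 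If $N_\pi$ has an entry of negative valuation, the Frobenius twists $N_\pi^{(q^n)}$ appearing in the true recursion have entries of valuation $\sim -cq^n$, and the inverse of the relevant linear operator on $C_n$ then costs $O(q^n)$ rather than $O(1)$ in valuation, so the bound $v(C_n)\ge -An$ degrades to something like $v(C_n)\gtrsim -cq^n$. The Lemma is still provable with that weaker input (one takes $k_1 > c$ plus a bounded margin, since $q^n\mu - cq^n - O(1) > \mu$ for $\mu$ large and all $n\ge 1$), but your argument as written breaks at exactly this point, and you flagged the integrality of $N_\pi$ as an unproblematic consequence of the normalization when it is in fact the crux.

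Second, a smaller point: your conclusion delivers $v(m_i(l_{\widehat{\Phi}}(x))) = v(m_i(x))$ only for those indices $i$ at which $\mu=\min_j v(m_j(x))$ is attained (for the others you only get $v(m_i(l_{\widehat{\Phi}}(x)))\ge\mu$). That is weaker than the Lemma's assertion of coordinate-wise equality for all $i=1,\dots,d$. You correctly observe that injectivity plus the lower bound $\ge\mu$ is what Corollary~\ref{keypt} actually needs, so the gap is harmless for the main theorem, but as a proof of the Lemma as stated it is incomplete; to get the full statement one would need a term-by-term bound stronger than $v(C_n)+q^n\mu > \mu$, namely $v(C_n)+q^n v(m_j(x)) > v(m_i(x))$ for each pair $(i,j)$, which does not follow from the estimate you prove without further argument (e.g.\ a triangularity or diagonal-dominance assumption on the $C_n$ that is not available in general).
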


\subsection{The exponential map} This is the inverse
$$
e_{\widehat{\Phi}} := l_{\widehat{\Phi}}^{-1} \in M_d(L_v \{\{\tau \}\})
$$
 of the formal logarithm. It satisfies $e_{\widehat{\Phi}} r = \widehat{\Phi}(r)e_{\widehat{\Phi}}$ for all $r \in R$.

 \begin{lemma} \label{expo}
There is a $k_2 = k_2(\Phi)$ such that $e_{\widehat{\Phi}}$ converges on $\mathbb{G}_a^d(\mathfrak{m}_v^{k_2})$, and has $v(m_i(e_{\widehat{\Phi}}(x))) = v(m_i(x))$, $i = 1,\ldots,d$, for all $x \in \mathbb{G}_a^d(\mathfrak{m}_v^{k_2})$.
 \end{lemma}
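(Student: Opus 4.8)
The plan is to obtain $e_{\widehat{\Phi}}$ as the compositional inverse of $l_{\widehat{\Phi}}$ directly from the recursion that defines it, and to extract convergence and valuation estimates from the same divisor-estimates on $\widehat{\Phi}(\pi)$ that were used for $l_{\widehat{\Phi}}$. Concretely, writing $e_{\widehat{\Phi}} = 1_d + \sum_{n \geq 1} E_n \tau^n$, the identity $e_{\widehat{\Phi}} r = \widehat{\Phi}(r) e_{\widehat{\Phi}}$ for $r = \pi$ (which suffices, exactly as for the logarithm, once $De_{\widehat{\Phi}} = 1_d$ is imposed) produces a recursion of the shape
\begin{equation} \label{recexp}
\big( (\pi - \pi^{q^n}) 1_d - N_{\pi} \big) E_n = \sum_{\substack{i+j=n \\ j < n}} B_i E_j^{q^i} ,
\end{equation}
where $\widehat{\Phi}(\pi) = \pi 1_d + N_\pi + \sum_{n \geq 1} B_n \tau^n$ as before, almost all $B_n \in M_d(O_v)$, and $N_\pi$ nilpotent. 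The left-hand coefficient is again invertible for every $n \geq 1$ (its determinant is, up to units, a power of $\pi - \pi^{q^n} \neq 0$), and for $n \gg 0$ it is the scalar $(\pi - \pi^{q^n}) 1_d$; inverting it costs a bounded denominator $q^{-O(1)}$ in valuation for small $n$ and exactly $v(\pi - \pi^{q^n}) = v(\pi)$ for large $n$.

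First I would run the standard majorant argument on \eqref{recexp}: setting $\rho_n := \min_{k,l} v((E_n)_{kl})$ (with $\rho_0 = 0$), the recursion gives $\rho_n \geq \min_{i+j=n,\, j<n}\big( v(B_i) + q^i \rho_j \big) - c$ for a fixed $c$ coming from the inversions, with $v(B_i) \geq 0$ for $i$ large and $v(B_i) \geq -c'$ for the finitely many small $i$. A routine induction then yields a linear lower bound $\rho_n \geq -c'' n$, which is exactly what is needed for $e_{\widehat{\Phi}}(x) = x + \sum_{n \geq 1} E_n x^{q^n}$ (coordinatewise) to converge on $\mathbb{G}_a^d(\mathfrak{m}_v^{k_2})$ for $k_2$ large enough that $q^n v(\text{input}) - c'' n \to +\infty$; the same estimate simultaneously forces every term with $n \geq 1$ to have strictly larger valuation than the $n = 0$ term on that disc, giving $v(m_i(e_{\widehat{\Phi}}(x))) = v(m_i(x))$.

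Actually the cleanest route avoids re-deriving \eqref{recexp} altogether: since $l_{\widehat{\Phi}} = 1_d + \sum_{n\ge1} C_n \tau^n$ satisfies the bounds extracted from \eqref{reclog} — $v(C_n) \geq -k_1 n$ for a suitable constant, by the argument already given for Lemma~\ref{loga} — one shows that on a small enough disc $l_{\widehat{\Phi}}$ is, coordinatewise, an isometric bijection of $\mathbb{G}_a^d(\mathfrak{m}_v^{k_1})$ onto itself (Lemma~\ref{loga} gives $v(m_i(l_{\widehat{\Phi}}(x))) = v(m_i(x))$, and the usual Newton-polygon / strict-triangle-inequality argument upgrades this to bijectivity of each coordinate map on that disc). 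Its set-theoretic inverse then agrees with the formal series $e_{\widehat{\Phi}} = l_{\widehat{\Phi}}^{-1}$ wherever the latter converges, and the isometry property transfers verbatim. I expect the only real point requiring care is the bookkeeping for the finitely many "bad" indices $i$ with $v(B_i) < 0$ (equivalently $v(C_i)<0$): one must choose $k_1, k_2$ large enough to absorb these, and check that the asymptotically-scalar, asymptotically-integral behaviour of the coefficients dominates — but this is exactly the mechanism already in play in Rosen's one-dimensional argument and in Lemma~\ref{loga}, so it transfers with only notational changes, and the nilpotence of $N_\pi$ is what keeps the left-hand coefficients of \eqref{reclog}–\eqref{recexp} invertible throughout.
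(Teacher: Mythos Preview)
Your overall strategy---write down the recursion for the $E_n$ from $\widehat{\Phi}(\pi)\,e_{\widehat{\Phi}} = e_{\widehat{\Phi}}\,\pi$ and extract a valuation bound by induction---is exactly what the paper intends (it simply cites Rosen's Propositions~2.2--2.3 for $d=1$ and asserts the same calculation works). However, your key quantitative claim is wrong: the ``routine induction'' does \emph{not} give a linear bound $\rho_n \geq -c''n$. The crucial asymmetry between \eqref{reclog} and your \eqref{recexp} is that in the logarithm recursion the Frobenius twist lands on $B_i$ (whose entries have nonnegative valuation under~(\ref{uni})), whereas in the exponential recursion it lands on $E_j$, giving a term $q^i\rho_j$ with $\rho_j$ typically \emph{negative}. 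Concretely, from $\rho_n \geq \min_{i+j=n,\,j<n}\bigl(v(B_i)+q^i\rho_j\bigr) - e$ with $v(B_i)\geq 1$ and $e=v(\pi)>1$, one gets $\rho_1\geq 1-e<0$ and then $\rho_n$ decays like $-q^n$, not $-n$ (try $q=e=2$: $\rho_1\geq -1$, $\rho_2\geq -3$, $\rho_3\geq -7$, \dots).

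The correct estimate, and the one Rosen proves in dimension one, is $\rho_n \geq -e\,\dfrac{q^n-1}{q-1} + O(1)$: set $a_n := \rho_n + e\,\dfrac{q^n-1}{q-1}$ and check from your recursion that $a_n \geq \min_{i+j=n,\,j<n}\bigl(v(B_i) + q^i a_j + e\,\tfrac{q^i-q}{q-1}\bigr) \geq \min_j q^i a_j + 1$, whence $a_n$ stays bounded below. This still yields the lemma---indeed the paper's explicit radius: $v\bigl(E_n x^{(q^n)}\bigr) \geq q^n\bigl(v(m_i(x)) - \tfrac{e}{q-1}\bigr) + O(1) \to +\infty$ once $v(m_i(x)) > e/(q-1)$, and the $n\geq 1$ terms are then strictly smaller than the $n=0$ term. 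So your argument is repairable, but the bound you asserted is false and would, if true, make the exponential entire, which it is not. Your alternative ``invert $l_{\widehat{\Phi}}$ as an isometry'' route is fine in spirit but is not self-contained: you still need to know the formal inverse converges somewhere before you can identify it with the set-theoretic inverse, which is precisely the estimate above.
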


\begin{proof}
This follows from the same calculations as in Propositions~2.2 and 2.3 of Rosen~\cite{rosen} (who considers the case $d = 1$). In fact, $
e_{\widehat{\Phi}}$ converges for all $x$ having $v(m_i(x)) > e/(q-1)$ for all $i = 1,\ldots,d$, where $e$ is the ramification index of $O_v/R$.
\end{proof}

\subsection{Conclusion}  Combining Lemmas~\ref{loga} and~\ref{expo}, let
$$
k := \max(k_1,k_2) < \infty.
$$
 We obtain:

\begin{corol}  \label{keypt}
If $x \in \mathbb{G}_a^d(\mathfrak{m}_v^k)$ has $\Phi(f)(x) = 0$ for some $f \in A \setminus \{0\}$, then $x = 0$.
\end{corol}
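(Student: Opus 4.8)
The plan is to transport the relation $\Phi(f)(x) = 0$ through the formal logarithm $l_{\widehat{\Phi}}$, using two properties of $l_{\widehat{\Phi}}$ established above: its intertwining with $\widehat{\Phi}$, and the coordinatewise isometry of Lemma~\ref{loga}. Since $l_{\widehat{\Phi}}$ converts the $\Phi$-action into the \emph{linear} $D\Phi$-action, which manifestly has no nonzero torsion, the corollary drops out at once.

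Concretely, suppose $x \in \mathbb{G}_a^d(\mathfrak{m}_v^k)$ satisfies $\Phi(f)(x) = 0$ for some $f \in A \setminus \{0\}$. Because $f \in A \subseteq R$, we have $\widehat{\Phi}(f) = \Phi(f) \in M_d(O_v\{\tau\})$ by~(\ref{uni}); in particular $\Phi(f)$ maps $\mathbb{G}_a^d(\mathfrak{m}_v^k)$ into itself, while by Lemma~\ref{loga} (here $k \geq k_1$) the series $l_{\widehat{\Phi}}$ converges on $\mathbb{G}_a^d(\mathfrak{m}_v^k)$ and preserves each coordinate valuation there. Evaluating the functional equation $l_{\widehat{\Phi}} \circ \widehat{\Phi}(f) = D\Phi(f) \circ l_{\widehat{\Phi}}$ at $x$ then gives $l_{\widehat{\Phi}}\big(\Phi(f)(x)\big) = D\Phi(f)\big(l_{\widehat{\Phi}}(x)\big)$. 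The left side is $l_{\widehat{\Phi}}(0) = 0$; and as $\Phi$ is an Anderson module, $D\Phi(f) \in M_d(L_v)$ has all eigenvalues equal to $f \neq 0$, hence is invertible. Therefore $l_{\widehat{\Phi}}(x) = 0$, and Lemma~\ref{loga} forces $v(m_i(x)) = v\big(m_i(l_{\widehat{\Phi}}(x))\big) = +\infty$ for $i = 1,\ldots,d$, i.e.\ $x = 0$.

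I expect no genuine obstacle here; the substance of the section lies in Lemmas~\ref{loga} and~\ref{expo}, and once those are in hand the corollary is a formality. The single point worth a remark is the legitimacy of passing from the formal intertwining identity (an equation of twisted power series) to the identity of its values at $x$: this is valid because $\Phi(f)$ has coefficients in $O_v$ and so sends $\mathbb{G}_a^d(\mathfrak{m}_v^k)$ into the disk of convergence of $l_{\widehat{\Phi}}$ — and anyway the inner value $\Phi(f)(x) = 0$ sits at the centre of that disk. One should also note that the argument is uniform in $f$: no case distinction according to whether $f \in \mathfrak{m}_v \cap A$ is needed, since the invertibility of $D\Phi(f)$ requires only $f \neq 0$.
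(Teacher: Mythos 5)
Your proof is correct, and it is a genuine streamlining of the paper's argument. The paper evaluates the exponential side of the functional equation, $e_{\widehat{\Phi}} \circ f = \widehat{\Phi}(f) \circ e_{\widehat{\Phi}}$, at the point $l_{\widehat{\Phi}}(x)$, getting $e_{\widehat{\Phi}}(f\,l_{\widehat{\Phi}}(x)) = \widehat{\Phi}(f)(x) = 0$, and then needs \emph{both} Lemma~\ref{loga} (to know $l_{\widehat{\Phi}}(x)$ lands in the small disk) and Lemma~\ref{expo} (to deduce $f\,l_{\widehat{\Phi}}(x) = 0$ from $e_{\widehat{\Phi}}$ being a coordinatewise isometry). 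You instead evaluate the logarithm side $l_{\widehat{\Phi}} \circ \widehat{\Phi}(f) = D\Phi(f) \circ l_{\widehat{\Phi}}$ directly at $x$, which discharges Lemma~\ref{expo} entirely and needs only $k \geq k_1$; the linear map $D\Phi(f)$ is invertible because its eigenvalues all equal $f \neq 0$. You should also be credited for writing the intertwining with $D\Phi(f)$ rather than the scalar $f$: for $d \geq 2$ the paper's formula $l_{\widehat{\Phi}} \circ \widehat{\Phi}(r) = r\,l_{\widehat{\Phi}}$ cannot hold verbatim when $N_r \neq 0$ (comparing zeroth-order terms forces $N_r = 0$), so the correct linearization is $D\Phi(r) = r\,1_d + N_r$, as in your version; the paper's argument is unaffected, since $D\Phi(f)$ is invertible whenever $f \neq 0$, but your statement of the identity is the accurate one. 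Your closing remark about the legitimacy of evaluating the formal composite at $x$ is exactly the right point to flag, and the justification you give (the inner value $\Phi(f)(x) = 0$ sits at the centre of the disk of convergence, and the rearrangement of the ultrametric double sum is unconditional) is sound.
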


\begin{proof}
We also have $\widehat{\Phi}(f)(x) = 0$. Evaluating $e_{\widehat{\Phi}} r = \widehat{\Phi}(r)e_{\widehat{\Phi}}$ at $l_{\widehat{\Phi}}(x)$ we obtain $e_{\widehat{\Phi}}( f l_{\widehat{\Phi}}(x) ) = 0$. Since $f \neq 0$, the two lemmas then yield $x = 0$.
\end{proof}

\section{Finiteness of local rational torsion} \label{fine}

Combining Lemma~\ref{bounded} and Corollary~\ref{keypt} we can now complete the proof of Theorem~\ref{main}. First of all, as noted at the beginning of section~\ref{formal}, conjugating $\Phi$ by a scalar $\lambda$ with $v(\lambda) \gg 0$ we may assume, for all $f \in A$, that $\Phi(f)$ satisfies condition~(\ref{uni}). (The $j$-th matrix coefficient of $\lambda^{-1}\Phi \lambda$ is $\lambda^{d(q^j - 1)}$ times the corresponding coefficient of $\Phi$.) Then Corollary~\ref{keypt} applies, for some $k = k(\Phi) < \infty$. Since the $A$-modules $\Phi$ and $\lambda^{-1} \Phi \lambda$ are isomorphic over $L_v$, we lose no generality in making this reduction.

Let us now fix a non-constant element $f \in A$. By Lemma~\ref{bounded} and the pigeonhole principle, there is an $M = M(k) < \infty$ such that, for all $x \in \Phi^{\mathrm{FJ}}(L_v)$, there are $0 \leq j_1 < j_2 \leq M$ such that $m_i(\Phi(f^{j_1})(x)) \equiv m_i(\Phi(f^{j_2})(x)) \mod{\mathfrak{m}_v^k}$ for $i = 1,\ldots,d$; this means $\Phi(f^{j_1} - f^{j_2})(x) \in \mathbb{G}_a^d(\mathfrak{m}_v^k)$. Since each $L_v$-rational torsion point is in $\Phi^{\mathrm{FJ}}(L_v)$ by the definition of the $L_v$-rational filled Julia module, this together with Corollary~\ref{keypt} yields a finite set $F \subset A$ of non-zero elements of $A$ such that every $L_v$-rational torsion point for $\Phi$ has order belonging to $F$. Altogether, this yields finiteness for the $L_v$-rational torsion.  \proofend

\section{Acknowledgements}

At the time of writing of this note I was unaware of Ingram's unpublished manuscript~\cite{ingraman}, where the compactness of the adelic filled Julia module of an abelian $A$-module $\phi$ over a global function field had been proved and used to derive that the torsion module of $\phi$ consists of points of bounded height. In any case the short proof given here of Lemma~\ref{bounded} follows the idea in Ingram's earlier paper~\cite{ingram} treating the case of a Drinfeld module.
It is a pleasure to thank Patrick Ingram for sending me his manuscript~\cite{ingraman}, and Najmuddin Fakhruddin, Dragos Ghioca and Patrick Ingram for several enlightening remarks about the problem considered here.

\end{document}